\begin{document}

\newtheorem{thm}{Theorem}
\newtheorem{lem}[thm]{Lemma}
\newtheorem{claim}[thm]{Claim}
\newtheorem{cor}[thm]{Corollary}
\newtheorem{prop}[thm]{Proposition} 
\newtheorem{definition}{Definition}
\newtheorem{question}[thm]{Open Question}
\newtheorem{conj}[thm]{Conjecture}
\newtheorem{prob}{Problem}
\def\vol {{\mathrm{vol\,}}}
\def\squareforqed{\hbox{\rlap{$\sqcap$}$\sqcup$}}
\def\qed{\ifmmode\squareforqed\else{\unskip\nobreak\hfil
\penalty50\hskip1em\null\nobreak\hfil\squareforqed
\parfillskip=0pt\finalhyphendemerits=0\endgraf}\fi}

\def\cA{{\mathcal A}}
\def\cB{{\mathcal B}}
\def\cC{{\mathcal C}}
\def\cD{{\mathcal D}}
\def\cE{{\mathcal E}}
\def\cF{{\mathcal F}}
\def\cG{{\mathcal G}}
\def\cH{{\mathcal H}}
\def\cI{{\mathcal I}}
\def\cJ{{\mathcal J}}
\def\cK{{\mathcal K}}
\def\cL{{\mathcal L}}
\def\cM{{\mathcal M}}
\def\cN{{\mathcal N}}
\def\cO{{\mathcal O}}
\def\cP{{\mathcal P}}
\def\cQ{{\mathcal Q}}
\def\cR{{\mathcal R}}
\def\cS{{\mathcal S}}
\def\cT{{\mathcal T}}
\def\cU{{\mathcal U}}
\def\cV{{\mathcal V}}
\def\cW{{\mathcal W}}
\def\cX{{\mathcal X}}
\def\cY{{\mathcal Y}}
\def\cZ{{\mathcal Z}}

\def\NmQR{N(m;Q,R)}
\def\VmQR{\cV(m;Q,R)}

\def\Xm{\cX_m}

\def \C {{\mathbb C}}
\def \F {{\mathbb F}}
\def \L {{\mathbb L}}
\def \K {{\mathbb K}}
\def \Q {{\mathbb Q}}
\def \R {{\mathbb R}}
\def \Z {{\mathbb Z}}
\def \fS{\mathfrak S}

\def\\{\cr}
\def\({\left(}
\def\){\right)}
\def\fl#1{\left\lfloor#1\right\rfloor}
\def\rf#1{\left\lceil#1\right\rceil}

\def \bFp {\overline \F_p}

\newcommand{\pfrac}[2]{{\left(\frac{#1}{#2}\right)}}

\def \Prob{{\mathrm {}}}
\def\e{\mathbf{e}}
\def\ep{{\mathbf{\,e}}_p}
\def\epp{{\mathbf{\,e}}_{p^2}}
\def\em{{\mathbf{\,e}}_m}

\def\Res{\mathrm{Res}}

\def\vec#1{\mathbf{#1}}
\def\flp#1{{\left\langle#1\right\rangle}_p}

\def\mand{\qquad\mbox{and}\qquad}

\newcommand{\comm}[1]{\marginpar{%
\vskip-\baselineskip 
\raggedright\footnotesize
\itshape\hrule\smallskip#1\par\smallskip\hrule}}

\title{Subgroups Generated by Rational Functions in Finite Fields}

\author{Domingo G\'omez-P\'erez} 
\address{Department of Mathematics, University of Cantabria, Santander 39005, Spain}
\email{domingo.gomez@unican.es}

\author{Igor E. Shparlinski} 
\address{Department of Pure Mathematics, University of New South Wales, 
Sydney, NSW 2052, Australia}
\email{igor.shparlinski@unsw.edu.au}

\date{\today}

\begin{abstract} For a large prime $p$, 
a rational function $\psi \in\F_p(X)$ over the finite field 
$\F_p$ of $p$ elements, 
and  integers $u$ and $H\ge 1$, we obtain 
a lower bound on the number  consecutive values $\psi(x)$, 
$x = u+1, \ldots, u+H$ that belong to a given multiplicative 
subgroup of $\F_p^*$. 
\end{abstract}

\subjclass[2010]{11D79, 11T06}

\keywords{polynomial congruences, finite fields}

\maketitle

\section{Introduction}

For a prime $p$, let $\F_p$ denote the finite field with $p$ elements,
which we always assume to be represented by the set $\{0, \ldots, p-1\}$.

Given a rational function 
$$
\psi(X) = \frac{f(X)}{g(X)}\in \F_p(X)
$$ 
where  $f, g\in\F_p[X]$ are relatively prime polynomials, 
and an `interesting' set $\cS \subseteq \F_p$, it is natural to ask 
how the value set 
$$
\psi(\cS) = \{\psi(x)~:~x \in \cS, \ g(x) \ne 0\}
$$
is distributed. For instance, given another `interesting' set $\cT$, 
our goal is to obtain nontrivial bounds on the size of the 
intersection
$$
N_\psi(\cS,\cT) = \#\(\psi(\cS) \cap \cT\).
$$
In particular, we are interested in the cases when
$N_\psi(\cS,\cT)$ achieves  the trivial upper bound 
$$
N_\psi(\cS,\cT) \le \min\{\#\cS, \# \cT\}.
$$

Typical examples of such sets $\cS$ and $\cT$ are given by 
intervals $\cI$ of consecutive 
integers and multiplicative subgroups $\cG$ of $\F_p^*$. 
For large intervals and subgroups, a standard application 
of bounds of exponential and multiplicative character sums
leads to asymptotic formulas for the relevant values of $N_\psi(\cS,\cT)$,
see~\cite{CCGHSZ,CGOS,Shp-GG}. Thus only the case of 
small intervals and groups is of interest.

For a polynomial $f \in\F_p[X]$ and two intervals $\cI = \{u+1, \ldots, u+H\}$ 
and $\cJ = \{v+1, \ldots, v+H\}$ of $H$ consecutive 
integers, various bounds on 
the cardinality of the intersection  $f(\cI) \cap \cJ$ are 
given in~\cite{CCGHSZ,CGOS}. To present some of these results, 
for positive integers $d$, $k$ and $H$, we denote by
$J_{d,k}(H)$ the number of solutions to the system of
equations
$$
x_1^{\nu}+\ldots +x_k^{\nu}=x_{k+1}^{\nu}+\ldots +x_{2k}^{\nu},
\qquad \nu = 1, \ldots, d,
$$
in positive integers $x_1,\ldots,x_{2k}\le H$.
Then by~\cite[Theorem~1]{CGOS}, for any $f \in \F_p[X]$ 
of degree $d \ge 2$  
and two intervals $\cI$ 
and $\cJ$ of  $H<p$ consecutive 
integers, we have
$$
N_f(\cI,\cJ) \le  H (H/p)^{1/2\kappa(d)+o(1)} + H^{1- (d-1)/2\kappa(d)+o(1)},
$$
as $H\to \infty$, 
where $\kappa(d)$ is the smallest integer $\kappa$ such
that for $k \ge \kappa$ there exists a constant
$C(d,k)$ depending only on $k$ and $d$ and such that 
$$
J_{d,k}(H) \le C(d,k) H^{2k - d(d+1)/2+o(1)}
$$
holds as   $H\to \infty$, see also~\cite{CCGHSZ} for some improvements 
and results for  related problems. 
In~\cite{CCGHSZ,CGOS} the bounds of
Wooley~\cite{Wool1,Wool2} 
are used that give the presently best known estimates on $\kappa(d)$ (at least for 
a large $d$), see also~\cite{Wool3} for further progress in estimating $\kappa(d)$.

It is easy to see that the argument of the proof of~\cite[Theorem~1]{CGOS} allows to 
consider intervals of $\cI$ and $\cJ$ of different lengths as well and for 
intervals 
$$
\cI = \{u+1, \ldots, u+H\} 
\mand \cJ = \{v+1, \ldots, v+K\}$$ 
with $1 \le H, K < p$ it leads to the bound
$$
N_f(\cI,\cJ) \le  H^{1+o(1)} \((K/p)^{1/2\kappa(d)} + (K/H^d)^{1/2\kappa(d)}\),
$$
see also a more general 
result of Kerr~\cite[Theorem~3.1]{Ker}  that applies to multivariate 
polynomials and to congruences modulo a composite 
number.

Furthermore, let $K_\psi(H)$ be the smallest $K$ for which there 
are intervals $\cI = \{u+1, \ldots, u+H\}$ 
and $\cJ = \{v+1, \ldots, v+K\}$ for which $N_\psi(\cI,\cJ) = \#\cI$.
That is, $K_\psi(H)$ is the length of the shortest interval, which 
may contain $H$ consecutive values of  $\psi \in\F_p(X)$ of degree $d$. 

Defining $\kappa^*(d)$ in the same way as $\kappa(d)$, however with 
respect to the more precise bound 
$$
J_{d,k}(H) \le C(d,k) H^{2k - d(d+1)/2}
$$
(that is, without $o(1)$ in the exponent) we can easily derive that
for any polynomial $f \in \F_p[X]$ of degree $d$, 
\begin{equation}
\label{eq:KH}
K_f(H) = O(H^d).
\end{equation}
To see that the bound~\eqref{eq:KH} is optimal it is enough to take 
$f(X) = X^d$ and $u=0$. Note that the proof of~\eqref{eq:KH}
depends only on the existence of  $\kappa^*(d)$ rather than 
on its specific bounds. 
However, we recall that Wooley~\cite[Theorem~1.2]{Wool1} shows that 
for some constant $\fS(d,k)> 0$ depending 
only on $d$ and $k$ we have
$$
J_{d,k}(H) \sim \fS(d,k) H^{2k - d(d+1)/2}
$$ 
for any fixed $d\ge 3$ and $k\ge d^2 + d + 1$.
In particular, $\kappa^*(d) \le d^2 + d + 1$.

Here we concentrate on estimating $N_\psi(\cI, \cG)$ 
for an interval $\cI$  of $H$ consecutive 
integers and a multiplicative subgroup $\cG \subseteq \F_p^*$ of order $T$. 
This question has been mentioned in~\cite[Section~4]{CGOS}
as an open problem. 

We remark that for linear polynomials $f$
the result of~\cite[Corollary~34]{BGKS1}
have a natural interpretation as a lower bound on the
order of  a subgroup $\cG\subseteq \F_p^*$ for which  $N_f(\cI, \cG) = \#\cI$. 
In particular, we infer from~\cite[Corollary~34]{BGKS1}
that for any linear polynomials $f(X) = aX + b\in \F_p[X]$
and  fixed integer $\nu =1, 2, \ldots$, 
for an interval $\cI$  of  $H \le p^{1/(\nu^2-1)}$ 
consecutive integers and a subgroup $\cG$, the equality 
$N_f(\cI, \cG) = \#\cI$ 
implies $\# \cG \ge H^{\nu + o(1)}$.

We also remark that 
the results of~\cite[Section~5]{BGKS2} have a similar interpretation
for the identity $N_f(\cI, \cG) = \#\cI$ with linear polynomials, 
however apply to almost all 
primes $p$ (rather than to all primes).

Furthermore, a result of Bourgain~\cite[Theorem~2]{Bour}
gives a nontrivial bound on the intersection of an interval centered at $0$, 
that is, of the form $\cI = \{0, \pm 1, \ldots, \pm H\}$
and a co-set $a\cG$ (with $a \in \F_p^*$) 
of a multiplicative group $\cG \subseteq \F_p^*$, provided
that $H < p^{1-\varepsilon}$ and $\#\cG \ge g_0(\varepsilon)$,
for some constant  $g_0(\varepsilon)$ depending only on 
an arbitrary $\varepsilon> 0$. 

We note that several bounds on $ \#\(f(\cG) \cap \cG\)$ 
for a  multiplicative subgroup $\cG \subseteq \F_p^*$ 
are given in~\cite{Shp-GG}, but they apply only to polynomials
$f$ defined over $\Z$ and are not uniform with respect  to the 
height (that is, the size of the coefficients) of $f$. 
Thus the question of estimating $N_f(\cG, \cG)$ remains open.
On the other hand, a number of results  about points on 
curves and algebraic varieties with coordinates from small 
subgroups, in particular, in relation to the 
{\it Poonen Conjecture\/},  
have been given in~\cite{BCGH-VJMMRS,Chang1,Chang2,CKSZ,Pop1,Pop2,Vol1,Vol2}.

We recall that
the notations $U = O(V)$, $U \ll V$ and  $V \gg U$  are all
equivalent to the statement that the inequality $|U| \le c\,V$ holds
with some constant $c> 0$. Throughout the paper, any implied constants
in these symbols
may occasionally depend, where obvious, on   $d=\deg f$ and $e=\deg g$, but are absolute otherwise.

\section{Preparations}

\subsection{Absolute irreducibility of some polynomials}
As usual, we use $\bFp$ to denote the algebraic closure of $\F_p$ and 
$X, Y$ to denote indeterminate variables. We also use $\bFp(X)$, $\bFp(Y)$, 
$\bFp(X,Y)$ to denote
the corresponding fields of rational functions over $\bFp$. 

We recall that the degree of
a rational function in the variables $X,Y$ 
$$
  F(X,Y) = \frac{s(X,Y)}{t(X,Y)}\in \bFp(X,Y),\qquad \gcd(s(X,Y), t(X,Y))= 1,
$$
is $\deg F= \max \{\deg s, \deg t\}$. 

It is also known that 
if $R(X)\in \bFp(X)$ is an rational function then 
\begin{equation}
\label{eq:deg prod}
\deg (R\circ F) = \deg R \deg F,
\end{equation}
where $\circ $ denotes the composition.

We use the following result of Bodin~\cite[Theorem~5.3]{Bod} adapted
to our purposes. 

\begin{lem}
 \label{lem:Bodin}
Let $s(X,Y),t(X,Y)\in\F_p[X,Y]$  be  polynomials such that there
does not exist a   rational function $R(X)\in\bFp(X)$ with 
$\deg R > 1$ and a bivariate  rational function $G(X,Y)\in\bFp[X,Y]$ 
such  that,
$$
F(X,Y) =  \frac{s(X,Y)}{t(X,Y)} = R(G(X,Y)).
$$
The number of elements $\lambda$ such that the polynomial 
$s(X,Y)-\lambda t(X,Y)$ is reducible over $\bFp[X,Y]$ is at most
$(\deg F)^2$.
\end{lem}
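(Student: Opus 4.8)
The plan is to treat $\{s - \lambda t = 0\}$ as a pencil of affine plane curves parametrized by $\lambda$, and to show that its members which are reducible over $\bFp$ are exactly the degenerate fibres of the rational map $F = s/t$, whose number is forced to be finite by the non-composite hypothesis and then bounded by the geometry of the pencil. Write $n = \deg F = \max\{\deg s, \deg t\}$ and $P_\lambda = s - \lambda t$.

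First I would pass to the generic member: regard $\lambda$ as transcendental and view $P_\lambda$ as an element of $\bFp(\lambda)[X,Y]$. Here I invoke a Bertini--Krull type dichotomy: the generic member $P_\lambda$ is reducible over $\overline{\bFp(\lambda)}$ if and only if $F$ admits a decomposition $F = R\circ G$ with $R\in\bFp(X)$ of degree exceeding $1$ and $G\in\bFp(X,Y)$, which is precisely what the hypothesis forbids. (In characteristic $p$ one must watch the inseparable case, but any such degeneration is again of the shape $R\circ G$, for instance with $R(Z)=Z^p$, and so is also excluded.) Consequently $P_\lambda$ is absolutely irreducible over $\bFp(\lambda)$, and by specialization $s - \lambda_0 t$ is irreducible over $\bFp$ for all but finitely many $\lambda_0$; it remains to count the exceptions.

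To obtain the count, I would pass to a projective and desingularized model. Homogenizing $s$ and $t$ to degree $n$, the members $\overline s - \lambda\overline t = 0$ become curves of degree $n$ in $\mathbb{P}^2$ through the common base locus $B=\{\overline s=\overline t=0\}$, which by B\'ezout has at most $n^2$ points counted with multiplicity. Resolving $B$ produces a surface $\widetilde S$ fibred over $\mathbb{P}^1$ whose general fibre $C_{\mathrm{gen}}$ is the (absolutely irreducible) generic member, while the reducible members appear as singular fibres. Additivity of the \'etale Euler characteristic along the fibration gives $e(\widetilde S) = e(\mathbb{P}^1)\,e(C_{\mathrm{gen}}) + \sum_{\lambda_0}\bigl(e(C_{\lambda_0}) - e(C_{\mathrm{gen}})\bigr)$, where every reducible fibre contributes a strictly positive term; since $e(\widetilde S) = e(\mathbb{P}^2) + \#B$ is controlled by $n$ through the B\'ezout bound on $\#B$, the number of reducible specializations is $O(n^2)$, and a careful accounting of the contributions yields the stated $(\deg F)^2$. (The finitely many $\lambda_0$ for which the leading forms of $s$ and $t$ become proportional, causing a drop in degree, are trivially few and absorbed into this bound.)

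The two delicate points are the following. The first is the characteristic-$p$ incarnation of Bertini--Krull: one must be sure that the only mechanism by which the generic member can split is an honest composition $R\circ G$ forbidden by the hypothesis, ruling out wild and inseparable phenomena; this is exactly the kind of structural input supplied by Bodin's preceding results, and it is the step I expect to be the main obstacle. The second is squeezing out the sharp constant $(\deg F)^2$ rather than a cruder $O(n^2)$: this requires knowing that each reducible fibre forces a jump of size at least one in the Euler-characteristic sum and that the total budget is governed precisely by the base-point multiplicities. As an alternative to the geometric count one can argue algebraically, factoring $P_\lambda$ over the local ring $\bFp[[\lambda-\lambda_0]]$ and bounding the number of $\lambda_0$ at which the factorization type changes by a resultant of degree $(\deg F)^2$; the combinatorial control of these factorization patterns is then the crux.
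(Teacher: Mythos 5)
You should first be aware that the paper contains no proof of this statement at all: Lemma~\ref{lem:Bodin} is imported wholesale from Bodin~\cite[Theorem~5.3]{Bod}, so there is no ``paper proof'' to match your argument against, and your attempt has to be judged as a standalone proof of Bodin's theorem. Your first step is sound as a reduction: the Bertini--Krull dichotomy (generic member of the pencil $s-\lambda t$ absolutely irreducible over $\overline{\bFp(\lambda)}$ unless $F=R\circ G$ with $\deg R\ge 2$), together with your observation that over the perfect field $\bFp$ any inseparable degeneration is itself a composition with $R(Z)=Z^p$, is exactly the structural input used in the literature, and citing it is legitimate.

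The genuine gap is in the counting step, and it is not merely a matter of ``careful accounting'': the Euler-characteristic budget you set up cannot yield the constant $(\deg F)^2$. Writing $n=\deg F$ and resolving the base locus by $n^2$ blow-ups, one has $e(\widetilde S)=3+n^2$, while $e(\mathbb{P}^1)\,e(C_{\mathrm{gen}})=2(2-2g)$ with $g$ as large as $(n-1)(n-2)/2$; so the total of the local jumps $\sum_{\lambda}\bigl(e(C_{\lambda})-e(C_{\mathrm{gen}})\bigr)$ is $n^2+4g-1\approx 3n^2$, and this budget is consumed by \emph{all} singular fibres, not just the reducible ones --- a generic pencil of degree $n$ has about $3(n-1)^2$ irreducible nodal members and no reducible members, each nodal member eating one unit of the budget. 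Hence the Euler-characteristic identity can only ever bound the reducible fibres by roughly $3n^2$, never by $n^2$. The tool that gives the sharp count is different: the N\'eron--Severi (Picard) rank of $\widetilde S$, which equals $1+n^2$ after the blow-ups, combined with Zariski's lemma that the classes of the irreducible components of the fibres are linearly independent in $NS(\widetilde S)$ modulo the class of a full fibre; this yields $\sum_{\lambda}(k_\lambda-1)\le \rho(\widetilde S)-2=n^2-1$, where $k_\lambda$ is the number of components of the fibre over $\lambda$, and it is characteristic-free, so the Swan-conductor corrections that threaten your formula in characteristic $p$ never arise. This Picard-rank argument is essentially how Lorenzini~\cite{Lor} and Bodin obtain bounds of this quality. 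Finally, note that component counting is blind to the values $\lambda$ for which $s-\lambda t$ is a proper power of an irreducible polynomial (reducible as a polynomial, irreducible as a curve); these multiple fibres, as well as the degree-drop member at $\lambda=\infty$, require a separate argument, and your proposal does not address them.
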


We say that a rational function $f\in \bFp(X)$ is a {\it perfect
  power\/} of another 
rational function if and only if $f(X) = (g(X))^n$ for some  rational
function $g(X)\in \bFp(X)$
and integer $n\ge 2$. Because $\bFp$ is algebraic closed field, it is
trivial to see that if $f(X)$ is a perfect power, then $af(X)$ is also
a perfect power for any $a\in\bFp$. We need the following easy
technical lemma.
\begin{lem}
  \label{lem:uni_factors}
  Let $P_1(X), Q_1(X)\in\bFp[X]$, $P_2(Y),Q_2(Y)\in\bFp[Y]$
by relatively prime polynomials. Then the following bivariate
  polynomial 
  \begin{equation*}
    r P_1(X)Q_2(Y) - s Q_1(X)P_2(Y),\quad
    r,s\in\bFp^*,
  \end{equation*}
  is not divisible by any univariate polynomial.
\end{lem}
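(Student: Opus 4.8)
The plan is to argue by contradiction, ruling out separately a nonconstant factor depending only on $X$ and one depending only on $Y$; since the two cases are entirely analogous, I would describe the first in detail. Write
$$
\Phi(X,Y) = r P_1(X)Q_2(Y) - s Q_1(X)P_2(Y),
$$
and suppose that some nonconstant $h(X)\in\bFp[X]$ divides $\Phi$. Because $\bFp$ is algebraically closed, $h$ has a linear factor $X-\alpha$, so that $(X-\alpha)\mid\Phi$ and hence $\Phi(\alpha,Y)$ is the zero polynomial in $\bFp[Y]$. This reduction to a single linear factor is exactly where algebraic closedness of $\bFp$ enters, and it is what makes the specialization argument below available.

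Specializing $X=\alpha$ yields the identity
$$
r P_1(\alpha)Q_2(Y) = s Q_1(\alpha)P_2(Y)
$$
in $\bFp[Y]$. The key observation is that $P_2(Y)$ and $Q_2(Y)$, being relatively prime and not both constant, are linearly independent over $\bFp$: a nontrivial relation $aP_2+bQ_2=0$ would force one of the two to divide the other, which contradicts $\gcd(P_2,Q_2)=1$ unless both are constant. Matching the two sides of the displayed identity against this independence therefore gives $rP_1(\alpha)=0$ and $sQ_1(\alpha)=0$. Since $r,s\in\bFp^*$, we conclude $P_1(\alpha)=Q_1(\alpha)=0$, so $X-\alpha$ is a common factor of $P_1$ and $Q_1$, contradicting their coprimality.

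The case of a nonconstant factor $h(Y)\in\bFp[Y]$ is handled in the same way: one passes to a linear factor $Y-\beta$, specializes $Y=\beta$ to obtain a linear relation between $P_1(X)$ and $Q_1(X)$, and invokes the coprimality of $P_1$ and $Q_1$ in place of that of $P_2$ and $Q_2$. I expect the only point requiring genuine care to be the passage from coprimality to linear independence of the specialized pair, since it tacitly uses that the relevant pair does not consist of two constants; once that is in place, the contradiction with coprimality of the other pair is immediate.
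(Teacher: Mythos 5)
Your proof is correct and takes essentially the same approach as the paper's: both specialize at a root $\alpha$ of the alleged univariate factor to get $rP_1(\alpha)Q_2(Y)=sQ_1(\alpha)P_2(Y)$ and then derive a contradiction with coprimality, your linear-independence observation being just a repackaging of the paper's two-case analysis ($rP_1(\alpha)=0$ versus $rP_1(\alpha)\neq 0$). The degenerate case you flag (both $P_2,Q_2$ constant) is one where the lemma as literally stated actually fails, and the paper's proof silently assumes it away as well, so your explicit caveat is if anything a point of extra care rather than a gap.
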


\begin{proof}
  Suppose that this polynomial was divisible by an univariate
  polynomial $d(X)$. Take $\alpha\in\bFp$ any root of the polynomial
  $d$ and substitute it getting,
  \begin{equation*}
    r P_1(\alpha)Q_2(Y) - s Q_1(\alpha)P_2(Y) =0\implies
    Q_2(Y) =\frac{s Q_1(\alpha)P_2(Y)}{ r P_1(\alpha)}. 
  \end{equation*}
  Here, we have two different possibilities:
  \begin{itemize}
  \item If $ r P_1(\alpha)=0$, then $Q_1(\alpha)=0$, and we get a
    contradiction,
  \item In other case, $\gcd(Q_2(Y),P_2(Y))\neq 1$, contradicting our
    hypothesis. 
  \end{itemize}
  This comment finishes the proof.
\end{proof}

Now, we prove the following result about
irreducibility. 

\begin{lem}
\label{lem:LinFact}
Given relatively prime polynomials $ f,g\in \bFp[X]$ and
if a rational function $f(X)/g(X)\in \bFp(X)$ of degree $D\ge 2$ 
is not a perfect power  then $f(X)g(Y) - \lambda f(Y)g(X)$ 
is reducible over $\bFp[X,Y]$ for at most $4D^2$ values 
of $\lambda \in \bFp^*$.
\end{lem}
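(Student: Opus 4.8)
The plan is to encode the problem through the bivariate rational function
\[
F(X,Y) = \frac{f(X)g(Y)}{f(Y)g(X)} = \frac{\psi(X)}{\psi(Y)}, \qquad \psi = f/g .
\]
Its numerator $s = f(X)g(Y)$ and denominator $t = f(Y)g(X)$ are coprime in $\bFp[X,Y]$: both factor completely into univariate polynomials, and a common factor would force a nontrivial common divisor of $f$ and $g$. Hence $\deg F = \deg f + \deg g \le 2D$, and $f(X)g(Y) - \lambda f(Y)g(X)$ is exactly the numerator of $F - \lambda$; by Lemma~\ref{lem:uni_factors} it has no univariate factor when $\lambda \ne 0$, so its reducibility is genuinely a statement about bivariate factors. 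The whole claim then reduces, via Lemma~\ref{lem:Bodin} (which bounds the number of bad $\lambda$ by $(\deg F)^2 \le 4D^2$), to verifying the single hypothesis of that lemma: that $F$ is not of the form $R\circ G$ with $R\in\bFp(X)$ of degree $\deg R > 1$.

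The heart of the matter, and the main obstacle, is to show that if $F = R(G)$ with $m = \deg R \ge 2$ then $\psi$ is a perfect power. First I would specialize $Y$ to a generic constant $c$: since $F(X,c) = \psi(X)/\psi(c)$, this yields a decomposition $\psi = A_c\circ g_c$ with $A_c = \psi(c)R$ of degree $m$ and $g_c(X)=G(X,c)$, which by~\eqref{eq:deg prod} is non-constant of degree $D/m$. The key observation is that the set of critical values of $\psi$ is a fixed finite set, whereas it contains the critical values of the outer map $A_c$, which is $\psi(c)$ times the set of critical values of $R$. If $R$ had a critical value $v \notin\{0,\infty\}$, then $\psi(c)\,v$ would lie in this fixed finite set for all generic $c$, which is impossible since $\psi(c)$ takes infinitely many values.

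I would therefore conclude that all critical values of $R$ lie in $\{0,\infty\}$. By Riemann--Hurwitz, valid in the tame range guaranteed here since $\deg R \le D < p$ (where $R$ is moreover automatically separable), this forces $R$ to be totally ramified over $0$ and over $\infty$, so $R(T)=a\,\ell(T)^m$ for some scalar $a$ and some M\"obius transformation $\ell$. Writing $G_1 = \ell\circ G$ we obtain
\[
\frac{\psi(X)}{\psi(Y)} = F = a\,G_1(X,Y)^m .
\]

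Finally I would extract the perfect-power conclusion from this identity by a unique factorization argument: cross-multiplying and writing $G_1 = P/Q$ in lowest terms shows $P^m \mid f(X)g(Y)$, so $P$ and likewise $Q$ are products of univariate factors, whence $G_1(X,Y) = u(X)v(Y)$ separates. Then $\psi(X)/\psi(Y) = a\,u(X)^m v(Y)^m$ forces $\psi(X) = \kappa\,u(X)^m$ for a constant $\kappa$, i.e. $\psi$ is a perfect $m$-th power, contradicting the hypothesis. The delicate point throughout is the passage from mere decomposability $F = R\circ G$ to the rigid shape $R = a\,\ell^m$; this is precisely where the bivariate nature of $F$ (through the varying specialization in $c$) and the separability afforded by $p$ being large are used.
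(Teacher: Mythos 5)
Your reduction to Lemma~\ref{lem:Bodin} is exactly the paper's starting point, but from there you take a genuinely different route. The paper never specializes $Y$ and never mentions ramification: it writes $R$ explicitly as a ratio of linear factors, matches the zero/pole structure of $R(G(X,Y))$ against the factorizations of $f(X)g(Y)$ and $g(X)f(Y)$ into univariate polynomials, and uses Lemma~\ref{lem:uni_factors} to force every zero of $R$ to equal $0$ and every pole to equal $s_1$, i.e. $R=a\bigl(X/(X-s_1)\bigr)^m$; this argument is elementary and works in any characteristic. You replace this with geometry: branch points of the outer map, after scaling by $\psi(c)$, must land in the fixed finite branch locus of $\psi$, hence lie in $\{0,\infty\}$, and Riemann--Hurwitz then pins down $R=a\,\ell^m$. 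Your closing unique-factorization step (separating $G_1(X,Y)=u(X)v(Y)$ and extracting $\psi=\kappa\,u^m$) is, if anything, more complete than the paper's corresponding ``comparing coefficients'' remark, and your separability observations are correct: an inseparable $\psi$ would lie in $\bFp(X^p)$ and hence be a perfect $p$-th power, and inseparability of $R$ would propagate to $\psi$.

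There is, however, one genuine gap: you justify Riemann--Hurwitz by tameness, ``guaranteed here since $\deg R\le D<p$'' --- but $D<p$ is not a hypothesis of Lemma~\ref{lem:LinFact}. The lemma is stated over $\bFp$ for arbitrary $D\ge 2$, and when $D\ge p$ wild ramification can occur, in which case your key implication fails: the separable map $R(T)=T^p-T$ is branched only over $\infty$, so all of its branch points lie in $\{0,\infty\}$, yet it is not of the form $a\,\ell(T)^m$ (it has $p$ distinct zeros). With wild ramification the Riemann--Hurwitz inequality runs in the wrong direction, so ``branch points contained in $\{0,\infty\}$'' no longer forces total ramification over $0$ and $\infty$; separability alone does not exclude wildness. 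Consequently your argument proves the lemma only under the additional hypothesis $D<p$. That weaker statement does suffice for Theorem~\ref{thm:N bound}, where $d$ and $e$ are fixed and only large $p$ matters, but it is not the lemma as stated; to repair the proof you would either add the hypothesis $D<p$ explicitly or treat wildly ramified $R$ by a separate argument, e.g. by falling back on the paper's characteristic-free factorization approach.
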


\begin{proof}
First we describe the idea of the proof. 
Our aim is to show that the condition of Lemma~\ref{lem:Bodin} 
holds for  the polynomial $f(X)g(Y)-\lambda f(Y)g(X)$. Indeed, we 
show that if
\begin{equation}
\label{eq:linfact}
  \frac{f(X)g(Y)}{g(X)f(Y)} = R(G(X,Y)),   
  \end{equation}
with a rational function $R\in \bFp(X)$ of degree $\deg R\ge 2$ 
and a bivariate rational function $G(X,Y) \in \bFp(X,Y)$, then  there
exists another $\widetilde{R}\in \bFp(X)$ and $\widetilde{G}(X,Y) \in \bFp(X,Y)$
$$
 \frac{f(X)g(Y)}{g(X)f(Y)} = \(\widetilde R\(\widetilde G(X,Y)\)\)^m,
$$ 
for an appropiate integer $m\ge 2$. Comparing coefficients, it is easy to
arrive at the conclusion that $f(X)/g(X)$ is a perfect power.
 
Without loss of generality, we suppose $R(0)=0$. 
So, indeed we have
$$
  R(X) = a  \frac{X\prod_{i=2}^{k}  (X-r_i)}{\prod_{j=1}^{m} (X-s_j)}.
$$
Writing $G(X,Y)= G_1(X,Y)/G_2(X,Y)$ in its lowest terms  
and by hypothesis, we have that
the  fraction on the right of this inequality,
\begin{equation*}
\begin{split}
   \frac{f(X)g(Y)}{g(X)f(Y)}& = 
  a \frac{G_2(X,Y)^{N-k}}{G_2(X,Y)^{N-m}}\\
&\qquad \quad \cdot
  \frac{G_1(X,Y)\prod_{i=2}^{k} 
    (G_1(X,Y)-r_i(G_2(X,Y))}{\prod_{j=1}^{m} (G_1(X,Y)-s_jG_2(X,Y))}, 
\end{split}
\end{equation*}
where 
$$
N = \max\{k,m\}
$$
is in its lowest terms. This means that $G_1(X,Y) = P_1(X)P_2(Y)$ and 
$G_2(X,Y) = s_1^{-1}(P_1(X)P_2(Y)-Q_1(X)Q_2(Y))$, where  $P_1,P_2,Q_1, Q_2$
are  divisors of $f$ or $ g$. 
Because $\gcd(G_1(X,Y),G_2(X,Y)) = 1$, we have that 
$$\gcd (P_1(X),Q_1(X))=\gcd(P_2(Y),Q_2(Y))=1.$$

Lemma~\ref{lem:uni_factors} implies that $m= k$ as otherwise 
$G_2(X,Y)$ is divisible by an univariate polynomial.
This implies, 
$$
  \frac{f(X)g(Y)}{g(X)f(Y)} = 
  a  \frac{G_1(X,Y)\prod_{i=2}^{m}
    (G_1(X,Y)-r_iG_2(X,Y))}{\prod_{j=1}^{m} (G_1(X,Y)-s_jG_2(X,Y))}. 
$$
Now, suppose that there exists another value 
$$
s\in\{r_2,\ldots,
r_m,s_2,\ldots, s_m\}, \qquad s\neq 0,s_1.
$$ 
Then, the following polynomial
$$
G_1(X,Y)-sG_2(X,Y) = (1-s s_1^{-1})P_1(X)P_2(Y) + s_1^{-1}Q_1(X)Q_2(Y) 
$$
is divisible by an univariate polynomial which  contradicts 
Lemma~\ref{lem:uni_factors}. 
So, this means that $R(X)$ can be written in the following form,
$$
  R(X) = \(\frac{X}{X-s_1}\)^{m},
$$
and this concludes the proof.
\end{proof}

Notice that the condition that $f(X)/g(X)$ is not a perfect power of a
polynomial is necessary, indeed if $f(X) = (h(X))^n$ and $g(X)=1$
with $f(X), h(X) \in  \bFp[X]$ then
$f(X) - \lambda^n f(Y)$ is divisible by $h(X)-\lambda h(Y)$
for any $\lambda \in \bFp$. 

\subsection{Integral points on affine curves}

We need the following estimate
of Bombieri and Pila~\cite{BP} on
the number of integral points on polynomial curves.

\begin{lem}\label{lem:BombPila}
Let $\cC$ be a plane absolutely irreducible curve of degree $n\ge 2$
and let $H\ge \exp(n^6)$. Then the number of integral points on $\cC$ 
inside of the square $[0,H]\times [0,H]$ is at most
$H^{1/n}\exp(12\sqrt{n\log H\log \log H})$.
\end{lem}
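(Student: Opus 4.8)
This is the Bombieri--Pila estimate, and the natural route is the \emph{determinant (or ``sliding arc'') method}; I would reconstruct that argument. Write the curve as $\cC:\ F(x,y)=0$ with $F\in\R[x,y]$ of degree $n$. The plan is first to reduce the global count to a count on short smooth arcs, then on each arc to play a nonzero integer determinant against an analytic smallness estimate, and finally to optimize the auxiliary parameters. First I would discard the $O(n^2)$ singular points and points of vertical tangency, which by Bézout applied to $F,\partial F/\partial x,\partial F/\partial y$ are finite in number; away from them the implicit function theorem presents $\cC$ locally as a real-analytic graph $y=\phi(x)$ (after possibly swapping the roles of $x,y$), with derivatives controlled in terms of $n$ and $H$. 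It then suffices to bound the integral points $(x_i,\phi(x_i))$ lying over a single such arc, which I partition into subarcs whose length is fixed later.

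On a subarc I fix a degree parameter $e$ and take the $D=\binom{e+2}{2}$ monomials $x^ay^b$ with $a+b\le e$. If the subarc carries at least $D$ integral points, I select $D$ of them, form the $D\times D$ matrix with entries $x_i^a\phi(x_i)^b$, and call its determinant $\Delta$. Since every coordinate is an integer, $\Delta\in\Z$, so either $\Delta=0$ or $|\Delta|\ge 1$. The analytic heart of the method is an upper bound for $|\Delta|$: regarding the rows as values of the $D$ functions $x\mapsto x^a\phi(x)^b$ at the points $x_i$, a generalized mean-value (confluent Vandermonde / Wronskian) estimate bounds $|\Delta|$ by the product of the pairwise gaps $\prod_{i<j}|x_i-x_j|$ times a uniform bound on the relevant Wronskian. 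Because the $x_i$ are confined to a short subarc, this forces $|\Delta|$ to be small, and choosing the subarc length appropriately makes $|\Delta|<1$, hence $\Delta=0$, for every choice of $D$ points.

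The simultaneous vanishing of all these determinants means the full set of integral points on the subarc satisfies a nontrivial linear relation among the monomials, i.e.\ they lie on an auxiliary curve $\cD:\ P(x,y)=0$ with $P\not\equiv 0$ and $\deg P\le e$. If $\cC\not\subseteq\cD$, then Bézout bounds $|\cC\cap\cD|\le ne$, so the subarc carries $O(ne)$ integral points; the case $F\mid P$ is excluded once $e<n$ by absolute irreducibility of $\cC$, and for larger $e$ one argues that $P$ may be chosen coprime to $F$. Summing the per-subarc bound over all subarcs, and restoring the $O(n^2)$ exceptional points, gives a total of the shape $(\text{number of subarcs})\cdot O(ne)$.

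The crux, and where essentially all the work lies, is the joint optimization. Raising $e$ sharpens the determinant estimate, so the subarcs may be taken longer and fewer, while simultaneously enlarging the Bézout contribution $ne$ per arc; balancing these two effects against $H$ is exactly what stabilizes the exponent at $1/n$ rather than letting it drift to $0$. The hard part will be carrying out this balance explicitly: the correct choice takes $e$ slowly increasing with $H$ (roughly of order $\sqrt{\log H/(n\log\log H)}$), and it is this choice that collapses the naive power-saving estimate into $H^{1/n}\exp\(12\sqrt{n\log H\log\log H}\)$. Pinning down the explicit constant $12$ and the admissibility threshold $H\ge\exp(n^6)$ demands careful bookkeeping in the Wronskian bound and in the uniform control of $\sup|\phi^{(k)}|$ on each arc, and I would expect that bookkeeping, rather than any single conceptual step, to be the main obstacle.
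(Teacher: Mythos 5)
The paper does not prove this lemma at all: it is quoted verbatim from Bombieri and Pila~\cite{BP} and used as a black box, so there is no internal proof to compare yours against; the only meaningful comparison is with the original argument in~\cite{BP}. That argument is indeed the determinant (``sliding arc'') method you describe, so your overall architecture --- smooth-arc decomposition, the integer-or-zero determinant dichotomy, an auxiliary curve, B\'ezout, and an optimization in the auxiliary degree $e$ --- is the right one.

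There is, however, a genuine gap at precisely the point where the exponent $1/n$ is created. You work with \emph{all} $D=\binom{e+2}{2}$ monomials $x^ay^b$, $a+b\le e$. If $e\ge n$, every determinant you form vanishes identically for a trivial reason: the evaluation vectors of these monomials at points of $\cC$ satisfy the linear relations coming from the polynomials $F\cdot Q$ with $\deg Q\le e-n$, simply because all the points lie on $\cC$. Hence the ``nontrivial linear relation'' produced by this vanishing can always be taken to be $P=F$ itself; it carries no information, B\'ezout cannot be applied, and your remark that ``for larger $e$ one argues that $P$ may be chosen coprime to $F$'' cannot be implemented in this setup. If instead you keep $e<n$ (where irreducibility does rule out $F\mid P$), the weight count for the full monomial set ($D\approx e^2/2$, degree sum $T\approx e^3/3$) gives the exponent $2T/(D(D-1))\approx 8/(3e)\ge 8/(3(n-1))$, which is strictly worse than $1/n$; so your scheme as written can never reach $H^{1/n}$. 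The missing idea in~\cite{BP} is to use a monomial basis \emph{modulo} $F$: after a linear change of coordinates making $F$ monic in $Y$ of degree $n$, one uses only the $D\approx ne$ monomials $x^ay^b$ with $b\le n-1$ and $a+b\le e$, which restrict to linearly independent functions on $\cC$. Then any nonzero relation $P$ supported on these monomials has $Y$-degree $<n$, hence is automatically not divisible by $F$, so B\'ezout gives $\le ne$ points per subarc even for large $e$; and the degree sum becomes $T\approx\tfrac12 ne^2$ against $D(D-1)/2\approx\tfrac12 n^2e^2$, which is exactly what yields the exponent $1/n+O(1/e)$ and, with your (correct) choice $e\asymp\sqrt{\log H/(n\log\log H)}$ together with the hypothesis $H\ge\exp(n^6)$ guaranteeing $e\ge n$, the stated factor $\exp(12\sqrt{n\log H\log\log H})$.
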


\subsection{Small values of linear functions}

We need a result about small values of residues modulo $p$
of several linear functions. Such a result has been 
derived in~\cite[Lemma~3.2]{CSZ} from 
 the Dirichlet pigeon-hole principle.
Here use a slightly more precise and explicit 
form of this result which is derived in~\cite{GG} 
from the {\it Minkowski theorem\/}. 

First we recall some standard notions of the 
theory of geometric lattices.
 
Let ${\vec{b}}_1,\ldots,{\vec{b}}_r$ 
be  $r$ linearly 
independent vectors in ${\R}^s$. The set
$$
\cL=\{\vec{z}  \ : \ \vec{z}=c_1\vec{b}_1+\ldots+
c_r\vec{b}_r,\quad c_1, \ldots, c_r\in\Z\}
$$
is called an {\it $r$-dimensional  lattice in $\R^s$\/} with
a {\it basis\/} $\{ {\vec{b}}_1,\ldots, {\vec{b}}_r\}$. 

To each lattice $\cL$ one can naturally associate its {\it
volume\/}
$$
\vol{\cL}  = \(\det \(B^tB\)\)^{1/2},
$$
where $B$ is the $s\times r$ matrix whose columns are 
formed by  the  vectors 
${\vec{b}}_1,\ldots,{\vec{b}}_r$ and
 $B^t$ is the 
transposition of $B$. It is well known that $\vol{\cL}$ 
 does not depend on the choice of  the   basis
$\{{\vec{b}}_1,\ldots,{\vec{b}}_r\}$, we refer to~\cite{GrLoSch}
for a background on lattices.

For a vector $\vec{u}$, let 
$$
\|\vec{u}\|_\infty = \max\{|u_1|, \ldots, |u_s|\}
$$
 denote its  {\it
infinity norm\/} of $\vec{u}= (u_1, \ldots, u_s) \in \R^s$. 

The famous {\it Minkowski theorem\/}, 
see~\cite[Theorem~5.3.6]{GrLoSch},  gives an upper bound
on the size 
of the shortest nonzero vector in any $r$-dimensional lattice
$\cL$ in terms of its volume.

\begin{lem}
\label{lem:MinkB} For any $r$-dimensional lattice $\cL$
we have
$$
 \min \left\{ \|\vec{z}\|_\infty   \colon \ \vec{z}  \in
\cL\setminus \{\vec{0}\}\right\} \le  \(\vol{\cL}\)^{1/r}. 
$$
\end{lem}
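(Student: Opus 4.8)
The statement is Minkowski's theorem specialised to the infinity-norm ball, so the plan is to deduce it from the standard convex body (first) form of Minkowski's theorem, which I take as given (see \cite{GrLoSch}). The scheme is to apply that theorem to a cube of the right size and then read off the resulting nonzero lattice vector.

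First I would reduce to the full-rank case. Writing $V=\operatorname{span}_{\R}(\vec b_1,\dots,\vec b_r)\subseteq\R^s$ for the $r$-dimensional subspace containing $\cL$ and fixing an orthonormal basis of $V$, one identifies $\cL$ with a full-rank lattice $\cL'\subset\R^r$. Its Euclidean covolume equals $\vol\cL=\(\det(B^tB)\)^{1/2}$, since the Gram matrix of the basis is unchanged under an orthonormal change of coordinates; note that the infinity norm on $\R^s$ restricts to a (generally non-standard) symmetric norm on $V$.

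Next, for a parameter $R>0$ I would apply the convex body theorem to the symmetric convex body $K_R=\{x\in V:\ \|x\|_\infty\le R\}=V\cap[-R,R]^s$, regarded inside $V\cong\R^r$. In its compact form the theorem produces a nonzero point of $\cL'$ in $K_R$ as soon as $\operatorname{vol}_r(K_R)\ge 2^r\vol\cL$. Taking $R=\(\vol\cL\)^{1/r}$, it therefore suffices to show $\operatorname{vol}_r(K_R)\ge (2R)^r=2^r\vol\cL$; the resulting vector $\vec z\in\cL\setminus\{\vec 0\}$ then satisfies $\|\vec z\|_\infty\le R=\(\vol\cL\)^{1/r}$, which is exactly the claim.

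The one genuinely nontrivial point is the volume bound $\operatorname{vol}_r\(V\cap[-R,R]^s\)\ge (2R)^r$, i.e.\ that an $r$-dimensional central slice of an $s$-dimensional cube is never smaller than a full-dimensional face of the same half-side. When $r=s$ this is an equality, and when $r=1$ it is elementary, since a unit direction $\vec u$ has $\|\vec u\|_\infty\le\|\vec u\|_2=1$, so the central chord has length $2R/\|\vec u\|_\infty\ge 2R$. For general $r$ this is \emph{Vaaler's cube-slicing inequality}, and it is where all the difficulty sits; everything else is bookkeeping. If one only wanted the bound up to a dimensional constant, the same scheme would run with the trivial estimate $\operatorname{vol}_r(K_R)\gg_r R^r$ coming from an inscribed Euclidean ball, but recovering the sharp constant $\(\vol\cL\)^{1/r}$ needs the precise slicing bound, which is also the content of the explicit Minkowski-type statement derived in \cite{GG}.
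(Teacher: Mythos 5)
Your argument is sound, but note that the paper does not prove this lemma at all: it is stated as the classical Minkowski theorem with a pointer to~\cite{GrLoSch}, so your proposal is necessarily a different route, namely an actual derivation from the convex body theorem. The derivation itself is correct. The reduction to a full-rank lattice $\cL'\subset\R^r$ via an orthonormal basis of $V$ preserves the Gram matrix and hence $\vol\cL$; the application of the compact form of the convex body theorem to $K_R=V\cap[-R,R]^s$ with $R=(\vol\cL)^{1/r}$ is legitimate (compactness of $K_R$ justifies using the non-strict inequality $\operatorname{vol}_r(K_R)\ge 2^r\vol\cL$); and you have correctly located the entire difficulty: for $1<r<s$ the volume bound $\operatorname{vol}_r(K_R)\ge(2R)^r$ is exactly Vaaler's cube-slicing inequality, a genuine theorem rather than bookkeeping, without which one only obtains the lemma with a dimension-dependent constant. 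Two remarks on what each approach buys. First, your decomposition is more informative but not more elementary: citing Vaaler is at least as heavy a black box as citing Minkowski's theorem for sublattices in the form the paper invokes, so as a matter of economy the paper's one-line citation is the shorter path. Second, in the only place the lemma is actually used (the proof of Lemma~\ref{lem:Red}), the lattice is generated by the columns of an $s\times s$ matrix, hence full rank; there $K_R$ is the whole cube, the slicing inequality degenerates to the equality you note for $r=s$, and the standard convex body theorem alone suffices. So your proof establishes the lemma in the full generality in which it is stated --- precisely the part the paper delegates to the literature --- at the cost of importing one deep geometric inequality.
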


For an integer $a$ we use $\flp{a}$ to denote the smallest by
absolute value residue of $a$ modulo $p$, that is
$$
\flp{a} = \min_{k\in \Z} |a - kp|.
$$
The following result is essentially contained in~\cite[Theorem~2]{GG}.
We include here a short proof.

\begin{lem}
\label{lem:Red}
For any real numbers $V_1,\ldots, V_{s} $ with
$$
p> V_1,\ldots, V_{s} \ge 1 \mand   V_1\ldots V_{s} > p^{s-1}
$$
and  
integers $b_1, \ldots, b_{s}$,  there exists an integer $v$ with $\gcd(v,p) =1$
such that
$$
\flp{b_i v} \le V_i, \qquad i =1, \ldots, s.
$$
\end{lem}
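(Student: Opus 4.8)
The plan is to construct an explicit lattice whose short vectors encode the desired residue bounds, and then invoke Minkowski's theorem (Lemma~\ref{lem:MinkB}) to produce the integer $v$. The natural object to consider is the $(s+1)$-dimensional lattice $\cL \subseteq \R^{s+1}$ generated by the rows of a suitable matrix. Specifically, I would set up $\cL$ to consist of all integer vectors of the form $(w, \flp{b_1 w} \text{-type coordinates})$, realized concretely as the lattice spanned by the vector $(1, b_1, \ldots, b_s)$ together with the vectors $p \vec{e}_2, \ldots, p\vec{e}_{s+1}$ that account for reduction modulo $p$. Thus a typical lattice point has the shape $(w, b_1 w - k_1 p, \ldots, b_s w - k_s p)$ for integers $w, k_1, \ldots, k_s$, and its coordinates $2, \ldots, s+1$ are precisely the integers $b_i w \bmod p$ (up to sign).

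The key technical device is rescaling. Rather than bounding all coordinates uniformly, I would rescale the $i$-th coordinate by $1/V_i$ and the first coordinate by an appropriate factor, so that the transformed lattice $\cL'$ has the property that a short vector in the $\|\cdot\|_\infty$ sense translates back into the inequalities $\flp{b_i v} \le V_i$. First I would compute $\vol{\cL'}$: the generating matrix is upper (or lower) triangular after scaling, so its volume is the product of the diagonal entries, which works out to $p^s$ divided by $V_1 \cdots V_s$ (times the chosen scaling of the first coordinate). Minkowski's bound then gives a nonzero vector of infinity norm at most $(\vol{\cL'})^{1/(s+1)}$. The hypothesis $V_1 \cdots V_s > p^{s-1}$ is exactly what forces this bound to be small enough to yield the claimed inequalities simultaneously, after one chooses the scaling of the first coordinate so that the $v$-coordinate is also controlled.

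The main obstacle I expect is twofold. First, I must ensure the short vector is genuinely nonzero \emph{in its $v$-component}, i.e.\ that $v \ne 0$; a Minkowski vector could a priori have $v = 0$, in which case the remaining coordinates would be multiples of $p$, and the inequalities $\flp{b_i v} \le V_i < p$ would force all of them to vanish, contradicting nontriviality. Balancing the scaling of the first coordinate against the $V_i$ so as to rule this out (and simultaneously keep $|v| < p$) is the delicate point. Second, I need $\gcd(v,p) = 1$: since $p$ is prime this only requires $p \nmid v$, which follows once $0 < |v| < p$ is established. I would therefore choose the weight on the first coordinate to be roughly $\(V_1 \cdots V_s / p^{s-1}\)^{1/s}$, so that the Minkowski bound on $|v|$ comes out strictly below $p$ while the bounds on the $\flp{b_i v}$ come out at most $V_i$.

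Since the statement asserts this is ``essentially contained in~\cite[Theorem~2]{GG}'', I expect the proof to be short: set up the lattice, read off the volume, apply Lemma~\ref{lem:MinkB}, and verify that the hypothesis $V_1 \cdots V_s > p^{s-1}$ together with $V_i < p$ disposes of the degeneracy $v = 0$. The arithmetic is routine once the lattice and its scaling are correctly chosen, so the whole argument should fit in a handful of lines.
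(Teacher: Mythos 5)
You take essentially the same route as the paper: encode the residues $b_i v$ modulo $p$ as the coordinates of a lattice rescaled by the target bounds, apply Minkowski's theorem (Lemma~\ref{lem:MinkB}), and rule out degenerate short vectors. The paper's only structural difference is that it first normalizes $b_1=1$, so the coordinate tracking $v$ doubles as the first constraint and its lattice is $s$-dimensional rather than $(s+1)$-dimensional; your version is marginally cleaner in that it needs no such normalization (hence no implicit invertibility of $b_1$ modulo $p$). Your treatment of the degeneracy is exactly right, and it is precisely the point the paper waves off as ``trivial to check'': a short vector with $v=0$ has some coordinate of absolute value at least $p/V_i>1$, and once $0<|v|<p$ the coprimality $\gcd(v,p)=1$ is automatic because $p$ is prime.

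There is, however, one concrete error: the weight you propose, $W=\left(V_1\cdots V_s/p^{s-1}\right)^{1/s}$, does not satisfy your own two requirements. Reading your scaling as dividing the first coordinate by $W$, the lattice $\cL'$ is generated by
$$
\left(\frac{1}{W},\frac{b_1}{V_1},\ldots,\frac{b_s}{V_s}\right)
\mand \frac{p}{V_i}\,\vec{e}_{i+1},\quad i=1,\ldots,s,
$$
so that $\vol{\cL'}=p^s/\left(WV_1\cdots V_s\right)$. To conclude $\flp{b_iv}\le V_i$ you need the Minkowski bound $\mu=\left(p^s/(WV_1\cdots V_s)\right)^{1/(s+1)}$ to be at most $1$, that is $W\ge p^s/(V_1\cdots V_s)$. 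If, say, $V_1\cdots V_s=2p^{s-1}$ with $p$ large, your choice gives $W=2^{1/s}$, far below this threshold, so $\mu>1$ and you only obtain $\flp{b_iv}\le V_i\mu$, not $\flp{b_iv}\le V_i$ (the opposite convention, multiplying by $W$, fails in the reverse direction). The choice your constraints actually force is $W=p^s/(V_1\cdots V_s)$: then $\vol{\cL'}=1$, Minkowski yields a nonzero vector of infinity norm at most $1$, hence $\flp{b_iv}\le V_i$ for all $i$ and $|v|\le W<p$, where $W<p$ is exactly the hypothesis $V_1\cdots V_s>p^{s-1}$. With this repair your argument closes and is equivalent to the paper's proof.
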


\begin{proof}
Without loss of the generality, we can take $b_1=1$. 
We introduce the following notation,
\begin{equation}
\label{eq:euclid}
V = \prod_{i=1}^{s}V_i
\end{equation}
and consider the lattice $\cL$ generated by the columns 
of the following  matrix
$$
B =  \left (
    \begin{matrix}
      b_sV/V_s & 0  &\ldots &0 & pV/V_s  \\
      b_{s-1}V/V_{s-1} & 0  &\ldots & pV/V_{s-1} & 0 \\
      \vdots &  \vdots &\vdots &\vdots & \vdots\\
   b_2V/V_2  & pV/V_2 & \ldots & 0 & 0\\
     V/V_1 & 0 & \ldots &0 & 0\\
    \end{matrix}\right ).
$$
Clearly the volume of $\cL$ is 
$$
\vol{\cL} = \frac{V}{V_1} \prod_{j=2}^s \frac{pV}{V_j} = V^{s-1}p^{s-1} \le V^s
$$
by~\eqref{eq:euclid} and the conditions on the size of the product $V_1\ldots V_{s}$. 
Consider a nonzero vector with the minimum infinity norm inside $\cL$. By the
definition of $\cL$, this vector
is a linear combination of the columns of $B$ with integer
coefficients, that is, it can be written in the following way 
\begin{equation*}
\(\frac{c_1V}{V_1}, \frac{(c_1b_2+c_2p)V}{V_2},\ldots, \frac{(c_1b_s+c_sp)V}{V_s}\),\quad 
c_1,\ldots, c_s\in\Z.
\end{equation*}
By Lemma~\ref{lem:MinkB} and the bound
on the volume of $\cL$, the following inequality holds,
\begin{equation*}
  \max\left \{\left |\frac{c_1V}{V_1}\right |, \left |\frac{(c_1b_2+c_2p)V}{V_2}\right |,\ldots,
  \left |\frac{(c_1b_s+c_sp)V}{V_s}\right |\right \}\le V.
\end{equation*}
From here, it is trivial to check that if we choose $v = c_1$, then 
\begin{itemize}
\item $\flp{v}= \flp{c_1}  \le V_1$,
\item $\flp{v b_i} = \flp{c_1 b_i} \le V_i, 
  \qquad i =2, \ldots, s$,
\end{itemize}
which finishes the proof.
\end{proof}

\section{Main Results}

\begin{thm} 
\label{thm:N bound}
Let  $\psi(X) = f(X)/g(X)$ where $f,g\in \F_p[X]$ 
relatively prime  polynomials of degree $d$ and $e$ respectively with $d+e\ge 1$.
We define
$$
\ell = \min\{d,e\}, \qquad 
m = \max\{d,e\}
$$
and set
$$
k=(\ell+1)\(\ell m - \ell^2 + m^2 +  m\) \mand 
s = 2m\ell+2m- \ell^2.
$$
Assume that $\psi$ 
is   not   a perfect power of another rational function over $\bFp$. 
Then for any interval $\cI$  of   
$H$ consecutive
integers and a subgroup $\cG$ of $\F_p^*$ of order $T$, 
we have 
$$
N_\psi(\cI, \cG)\ll (1 + H^{\rho} p^{-\vartheta}) H^{\tau+o(1)} T^{1/2},
$$
 where 
$$
\vartheta = \frac{1}{2s}, \qquad  \rho = \frac{k}{2s}, 
\qquad \tau = \frac{1}{2(\ell+m)},
$$
and  the implied constant depends on $d$ and $e$.
\end{thm}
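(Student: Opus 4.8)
The plan is to pass from the count $N=N_\psi(\cI,\cG)$ to a count of pairs, exploiting that $\cG$ is a group. Writing $\cX=\{x\in\cI:\ g(x)\ne0,\ \psi(x)\in\cG\}$, so that $N=\#\cX$, for every pair $(x,y)\in\cX^2$ we have $\psi(x)/\psi(y)\in\cG$, whence
$$
N^2=\sum_{\mu\in\cG}r(\mu),\qquad r(\mu)=\#\{(x,y)\in\cX^2:\ \psi(x)=\mu\psi(y)\}.
$$
Each pair counted by $r(\mu)$ lies on the affine curve $F_\mu(X,Y)=f(X)g(Y)-\mu f(Y)g(X)\equiv0\pmod p$, of total degree $n=\ell+m$. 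I would split the sum according to whether $F_\mu$ is absolutely irreducible over $\bFp$. Since $\psi$ has degree $m$ and is not a perfect power, Lemma~\ref{lem:LinFact} shows that $F_\mu$ is reducible for at most $4m^2$ values of $\mu$; for these, and for $\mu=1$, I would use the cheap bound $r(\mu)\le mN$, valid because for a fixed $y\in\cX$ the equation $\psi(x)=\mu\psi(y)$ has at most $m$ roots $x$. This contributes only $O(N)$ to $N^2$ and so is harmless.

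For the remaining $\mu$, where $F_\mu$ is absolutely irreducible of degree $n\ge2$, the task is to bound $r(\mu)$ by counting points of $F_\mu\equiv0$ in the box $\cI\times\cI$. First I would translate $X,Y$ by $u$; since the shift is applied to each variable \emph{separately}, the Newton support of $F_\mu$ is unchanged and still consists of the $s+1$ monomials filling $[0,d]\times[0,e]\cup[0,e]\times[0,d]$, while absolute irreducibility and the degree $n$ are preserved. Next I would normalise a guaranteed-nonzero coefficient (for instance the top-degree one, nonzero since $\mu\ne1$) and feed the remaining $s$ coefficients into Lemma~\ref{lem:Red} with weights $V_{ij}$ proportional to $W/H^{i+j}$, choosing $W$ as small as the constraint $\prod V_{ij}>p^{s-1}$ allows, namely $W\asymp p^{1-1/s}H^{k/s}$; here $s$ is the number of monomials minus one and $k$ is the sum of the total degrees of all monomials, quantities that a direct computation identifies with those in the statement. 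This yields a small multiplier $v$, coprime to $p$, and an integer polynomial $\widetilde F$ with the same support as $F_\mu$, satisfying $\widetilde F\equiv vF_\mu\pmod p$ and $|\widetilde F(x,y)|\ll W$ on $[0,H]^2$. Hence every relevant point obeys $\widetilde F(x,y)=tp$ for some integer $t$ with $|t|\ll W/p\asymp(H^k/p)^{1/s}$, so the points fall on $\ll 1+(H^k/p)^{1/s}$ integer curves.

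The crucial point I would then establish is that $\widetilde F(X,Y)-tp$ is absolutely irreducible for \emph{every} $t$, not merely for $t=0$: changing $t$ alters only the constant term, so all these polynomials share the degree-$n$ leading form of $\widetilde F$, which is nonzero modulo $p$; consequently any factorisation over $\overline{\Q}$ would reduce, without loss of degree, to a nontrivial factorisation of $vF_\mu$ over $\bFp$, contradicting the absolute irreducibility of $F_\mu$. Granting this, Lemma~\ref{lem:BombPila} bounds the integral points on each curve in $[0,H]^2$ by $H^{1/n+o(1)}$, so $r(\mu)\ll(1+(H^k/p)^{1/s})H^{1/n+o(1)}$. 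Summing over the at most $T$ generic $\mu$ and adding the $O(N)$ from the exceptional ones gives
$$
N^2\ll N+T\,\bigl(1+(H^k/p)^{1/s}\bigr)\,H^{1/n+o(1)},
$$
and solving this quadratic inequality, together with $1/n=2\tau$, $k/s=2\rho$ and $1/s=2\vartheta$, yields the claimed estimate. The main obstacle is the lifting of the modular curve to genuine integer curves with simultaneously small coefficients, and in particular the verification that every shifted curve $\widetilde F-tp$ stays absolutely irreducible so that Bombieri--Pila applies uniformly; the non-perfect-power hypothesis enters exactly to keep the number of reducible $F_\mu$ bounded through Lemma~\ref{lem:LinFact}. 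A few routine technical matters — ensuring $1\le V_{ij}<p$, the degenerate case $n=1$, and the range $H\ge\exp(n^6)$ required by Lemma~\ref{lem:BombPila} — would be disposed of separately, with the implied constants depending on $d$ and $e$.
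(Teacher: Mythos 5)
Your proposal follows the paper's proof almost step for step: pass to pairs using the group structure (your summation of $r(\mu)$ over all $\mu\in\cG$ is an immaterial variant of the paper's pigeonhole choice of a single $\lambda\in\cG\setminus\Lambda$ with at least $r^2/(2T)$ pairs), discard exceptional ratios via Lemma~\ref{lem:LinFact}, build small-coefficient integer lifts via Lemma~\ref{lem:Red}, note that each lift $\widetilde F-tp$ remains absolutely irreducible since a factorization over $\C$ would descend to one of $f(X)g(Y)-\mu f(Y)g(X)$ over $\bFp$, apply Lemma~\ref{lem:BombPila} uniformly in $t$, and solve the resulting quadratic inequality; your justification of the irreducibility of the shifted curves is in fact more careful than the paper's one-line assertion.

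There is, however, a genuine flaw in your lattice step: you exclude the wrong coefficient. You normalise the top-degree coefficient and feed the remaining $s$ coefficients, \emph{including the constant term}, into Lemma~\ref{lem:Red}. First, your bookkeeping is then inconsistent: the sum of the total degrees of the monomials you actually feed is $k-(\ell+m)$, not $k$, so the constraint $\prod V_{ij}>p^{s-1}$ yields $W\asymp p^{1-1/s}H^{(k-\ell-m)/s}$, not the claimed $p^{1-1/s}H^{k/s}$. Second, and fatally, the excluded coefficient multiplies the monomial of top degree $\ell+m$: Lemma~\ref{lem:Red} gives no control on the least residue of $v$ times an unfed coefficient, so in the lift $\widetilde F\equiv vF_\mu\pmod p$ that coefficient can only be bounded by $p/2$, contributing about $pH^{\ell+m}$ to $|\widetilde F(x,y)|$ on the box and hence about $H^{\ell+m}$ extra curves $\widetilde F=tp$, which makes the final bound worse than trivial. (You cannot repair this by declaring the lift's leading coefficient to be exactly $1$: the lemma controls $\flp{vb}$ only for fed $b$, and the alternative accounting both changes the exponents and violates the hypothesis $V_{ij}<p$ at $(i,j)=(0,0)$ once $H^{k-\ell-m}\ge p$, which occurs inside the relevant range of $H$.) The paper makes the opposite, correct choice: feed all $s$ nonconstant coefficients — the top-degree one included; it is precisely the slot used for the lemma's internal normalisation $b_1=1$ — and leave only the \emph{constant} term uncontrolled. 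Its trivial bound $p/2$ inflates the value estimate to $|\widetilde F(x,y)|\ll W+p$ (your claim $\ll W$ is also off on this point), but that is harmless: after dividing by $p$ it adds only $O(1)$ to the number of curves. With this swap, the rest of your argument goes through exactly as written and coincides with the paper's proof.
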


\begin{proof} Clearly we can assume that 
\begin{equation}
\label{eq:const c}
H \le c p^{2\vartheta/(2\rho-1)}
\end{equation}
for some constant $c>0$ which may depend on $d$ and $e$ 
as otherwise one easily verifies that $H^{\rho} p^{-\vartheta}\ge 1$
and
$$
H^{\rho+\tau}p^{-\vartheta} \ge H^{1/2},
$$
and hence
the desired bound 
is weaker than the trivial estimate 
$$
N_\psi(\cI, \cG)\ll \min\{H,T\} \le H^{1/2} T^{1/2}.
$$
 
Making the transformation $X \mapsto X+u$, 
we can assume that  $\cI = \{1, \ldots, H\}$.
Let $1 \le x_1 < \ldots < x_r \le H$ be all $r=N_\psi(\cI, \cG)$ values of $x \in \cI$
with  $\psi(x) \in \cG$.

Let $\Lambda$ be the set of exceptional values of $\lambda \in \bFp$ 
described in Lemma~\ref{lem:LinFact}. 
We see that there are only at most $4m^3r$ pairs $(x_i,x_j)$, $1 \le
i,j \le r$, 
for which  $\psi(x_i)/\psi(x_j)\in \Lambda$.
 Indeed, if $x_j$ is fixed, then
$\psi(x_i)$ can take  
at most $4m^2$ values of the form $\lambda \psi(x_j)$, with $\lambda \in \Lambda$,

Furthermore, each value
$\lambda \psi(x_j)$ can be taken by $\psi(x_i)$  for at most $D$
possible values of $i=1, \ldots, r$. 

We now assume that $r > 8m^3$ as otherwise there is nothing to prove. 
Therefore, 
there is $\lambda \in \cG \setminus \Lambda$  such that 
\begin{equation}
\label{eq:cong f}
\psi(x) \equiv \lambda \psi(y) \pmod p
\end{equation}
for at least 
\begin{equation}
\label{eq:rT}
\frac{r^2 -4m^3r}{T} \ge \frac{r^2}{2T} 
\end{equation}
pairs $(x,y)$ with $x,y \in \{1, \ldots, H\}$.

Let 
$$
f(X)g(Y) - \lambda f(Y) g(X) = \sum_{i=0}^m \sum_{j=0}^m b_{i,j} X^iY^j
$$
Let 
$$
\cH =\{(i,j)~:~i,j=0, \ldots, m, \ i+j \ge 1, \min\{i,j\} \le \ell \}.
$$
Clearly the noncostant terms $b_{i,j} X^iY^j$ of $f(X)g(Y) - \lambda f(Y) g(X)$
are supported only on the subscripts $(i,j) \in \cH$. 
We have
$$
\#\cH = 2(m+1)(\ell+1) - (\ell+1)^2 -1 = s 
$$
We now apply Lemma~\ref{lem:Red} with $s = \# \cH$ and
the vector  
$\(b_{i,j}\)_{(i,j) \in \cH}$.

We also define the quantities $U$ and 
$V_{i,j}$, $(i,j) \in \cH$ 
by the relations
$$
V_{i,j}H^{i+j} = U, \qquad (i,j) \in \cH, 
$$
thus
$$
\prod_{(i,j) \in \cH} V_{i,j} 
=  2p^{s-1}.
$$
By  Lemma~\ref{lem:Red} there is an integer $v$ with $\gcd(v,p)=1$ 
such that 
$$
\flp{b_{i,j}v} \le V_{i,j}  
$$
for every $(i,j) \in \cH$.

We have
\begin{equation*}
\begin{split}
\sum_{(i,j) \in \cH} & (i +j)
=2 \sum_{i=0}^m \sum_{j=0}^\ell (i+j) - 
\sum_{i=0}^\ell \sum_{j=0}^\ell (i+j) \\
&= 2 \sum_{i=0}^m \((\ell+1)i+\frac{\ell(\ell+1)}{2}\) - 
\sum_{i=0}^\ell\((\ell+1)i+\frac{\ell(\ell+1)}{2}\)\\
&= 2\(\frac{(\ell+1)m(m+1)}{2} + \frac{\ell(\ell+1)(m+1)}{2}\) \\
& \qquad \qquad\qquad\qquad - 
\frac{\ell(\ell+1)^2}{2}- \frac{\ell(\ell+1)^2}{2} = k.\end{split}
\end{equation*}

Certainly it is easy to evaluate $V_{i,j}$ and $V^{(\lambda)}_{i,j}$, 
$(i,j) \in \cH$ explicitly, however it is enough for us to note that 
we have
$$
U^{s} H^{k}
= 2p^{s-1}.
$$

Hence
\begin{equation}
\label{eq:U}
U = 2^{1/3}p^{1-1/s} H^{k/s}.
\end{equation}
We also assume that the constant $c$  in~\eqref{eq:const c}
is small enough so the condition 
$$
\max_{(i,j) \in \cH} \left\{V_{i,j}, V^{(\lambda)}_{i,j}\right\} = UH^{-1}
 <p $$
is satisfied. 

Let $F(X,Y) \in \Z[X]$  and $G(X,Y) \in \Z[X]$ be polynomials 
with coefficients in the interval $[-p/2,p/2]$, 
obtained by reducing $vf(X)g(Y)$ and
$v\lambda f(Y)g(X)$ modulo $p$, respectively. 
Clearly~\eqref{eq:cong f} implies 
\begin{equation}
\label{eq:Cong FG}
F(x,y) \equiv  G(x,y) \pmod p.
\end{equation}
Furthermore,  since for $x,y\in \{1, \ldots, H\}$,
we see from~\eqref{eq:U} and the trivial estimate  
on the constant coefficients (that is, $|F(0)|, |G(0)| \le p/2$)
that 
$$
|F(x,y)-G(x,y)| \ll U+p \ll  p^{1-1/s} H^{k/s} + p, 
$$ 
which together with~\eqref{eq:Cong FG} implies that 
\begin{equation}
\label{eq:Eq FGz}
F(x,y) =  G(x,y) + zp
\end{equation}
for some integer $z \ll p^{-1/s} H^{k/s}+1$.

Clearly, for any integer $z$ the reducibility of  $F(X,Y) - G(X,Y)-pz$  over $\C$ implies the
reducibility of $F(X,Y) - G(X,Y)$  over $\bFp$, or equaivalently
$f(X)g(Y) -\lambda f(Y)g(X)$ over $\bFp$, which is 
impossible because $\lambda \not \in \Lambda$. 
 
Because  $F(X,Y) - G(X,Y)-pz\in \C[X,Y]$  is  irreducible over $\C$ and has 
degree $d$, we derive from
Lemma~\ref{lem:BombPila} that for every $z$ the equation~\eqref{eq:Eq FGz}
has at most $H^{1/(d+e)+o(1)}$ solutions.
Thus the congruence~\eqref{eq:cong f} has at most
$O\(H^{1/(d+e)+o(1)} \(p^{-1/s} H^{k/s}+1\)\)$ solutions.
This, together with~\eqref{eq:rT}, yields the inequality 
$$
\frac{r^2}{2T}  \ll H^{1/(d+e)+o(1)} \(p^{-1/s} H^{k/s}+1\),
$$ 
and concludes the proof.
\end{proof}

Clearly, in the case when $e=0$, that is, $\psi=f$ is a 
polynomial of degree $d\ge 2$, the bound of Theorem~\ref{thm:N bound}
takes form
$$
N_\psi(\cI, \cG)\ll \(1 + H^{(d+1)/4} p^{-1/4d}\) H^{1/2d+o(1)} T^{1/2}.
$$

\section{Comments}

Clearly Theorem~\ref{thm:N bound} also provides a bound
for the case where rational function $\psi = \varphi^s$, with $\varphi\in\bFp(X)$. 
This comes from the fact that
\begin{equation*}
  \psi(x)\in\cG \implies \varphi(x)\in\cG_0,
\end{equation*}
where $\cG_0$ is a multiplicative subgroup of $\bFp$ of order
bounded by $sT$. However the resulting bound depends now on the degrees 
of the polynomials associated with $\varphi$ rather than that of $\psi$.  

Another consequence  from Theorem~\ref{thm:N bound} is the following:
given an interval $\cI$ and a subgroup $\cG \in \F_p^*$,  satisfying $N_\psi(\cI, \cG)= \#\cI$ 
then 
$$
\#\cG \gg \min\{(\# \cI)^{2 - 2 \tau+o(1)}, 
(\# \cI)^{1-2\rho - 2\tau+o(1)}p^{2\vartheta} \}
$$ 
where the implied constant depends only on $d$ and $e$. However, we believe that this bound 
is very unlikely to be tight.

\section*{Acknowledgements}

D.~G-P. would like  to thank  Macquarie University for the
 support and hospitality during his  stay in Australia. 
 
During the preparation of this paper  D.~G-P. was supported by the 
Ministerio de Economia y Competitividad project TIN2011-27479-C04-04
and I.~S.  by the  Australian Research Council
Grants~DP130100237 and~DP140100118.


\begin{thebibliography}{9}

\bibitem{Bod}
A. Bodin, `Reducibility of rational functions in several variables',
{\it Israel J. Math.\/}, {\bf 164} (2008),  333--347.


\bibitem{BP} E. Bombieri and J. Pila, `The number of integral points on arcs and
ovals', {\it Duke Math. J.\/}, {\bf 59} (1989), 337--357.

\bibitem{Bour} J. Bourgain, `On the distribution of the residues
of small multiplicative subgroups of $\F_p$', 
 {\it Israel J. Math.\/}, {\bf 172} (2009), 61--74.


\bibitem{BGKS1} J.~Bourgain, M.~Z.~ Garaev, S. V. Konyagin and
I. E. Shparlinski,
`On the hidden shifted power problem',
{\it SIAM J. Comp.\/}, {\bf 41} (2012),   1524--1557.

\bibitem{BGKS2} J.~Bourgain, M.~Z.~Garaev, S. V. Konyagin 
and I. E. Shparlinski, `Multiplicative  congruences with 
variables from short intervals',  
{\it  J. d'Analyse Math.\/}, (to appear). 

\bibitem{BCGH-VJMMRS}
J. F. Burkhart, N. J. Calkin, S. Gao, J. C. Hyde-Volpe, K. James, 
H.~Maharaj, S.~Manber, J.~Ruiz and E.~Smith, 
`Finite field elements of high order arising from modular curve',
{\it Designs, Codes and Cryptography\/}, {\bf 51}  (2009), 301--314.

\bibitem{CCGHSZ} M.-C. Chang, J. Cilleruelo, M. Z. Garaev, J.  Hern\'andez,
I. E. Shparlinski and A. Zumalac\'{a}rregui,
`Points on curves in small boxes and applications',
{\it Preprint\/}, 2011 (available from {\tt http://arxiv.org/abs/1111.1543}).


\bibitem{Chang1} M.-C. Chang, `Order of Gauss periods in 
large characteristic', {\it Taiwanese J. Math.\/}, 
{\bf 17} (2013), 621--628.

\bibitem{Chang2} M.-C. Chang, `Elements of large order in 
prime finite fields', {\it Bull. Aust. Math. Soc.\/}, 
{\bf 88} (2013), 169--176.


\bibitem{CKSZ} M.-C. Chang, B. Kerr, I. E. Shparlinski 
and U. Zannier, `Elements of large order  on varieties over 
prime finite fields', {\it Preprint\/}, 2013. 


\bibitem{CGOS} J. Cilleruelo, M. Z. Garaev,  A. Ostafe and
I. E. Shparlinski,
`On the concentration of points of polynomial maps
and applications',
{\it Math. Zeit.\/}, {\bf 272} (2012), 825--837.

\bibitem{CSZ} J. Cilleruelo, I. E. Shparlinski and A. Zumalac\'{a}rregui,
`Isomorphism classes of elliptic curves over a finite field in some
thin families',  {\it Math. Res. Letters\/}, {\bf 19} (2012), 335--343.

\bibitem{GG} 
 D. G\'omez-P\'erez and J. Gutierrez, `On the linear complexity and lattice
test of nonlinear pseudorandom number generators', 
{\it Preprint \/}, 2013
%
\bibitem{GrLoSch}
M.~Gr{\"o}tschel, L.~Lov{\'a}sz and A.~Schrijver, 
{\it Geometric algorithms and combinatorial optimization\/}, 
Springer, Berlin, Germany, 1993.

\bibitem{Ker} B.  Kerr, `Solutions to polynomial 
congruences in well shaped sets',
 {\it Bull. Aust. Math. Soc.\/},  (to appear).


\bibitem{Lor} D. Lorenzini, `Reducibility of polynomials in two variables', 
{\it J. Algebra\/},  {\bf 156} (1993), 65--75.


\bibitem{Pop1}
R. Popovych, `Elements of high order in finite fields of the form
$\F_q[x]/\Phi_r(x)$',
{\it Finite Fields Appl.\/}, {\bf 18} (2012), 700--710.

\bibitem{Pop2}
R. Popovych, `Elements of high order in finite fields of the form
$\F_q[x]/(x^m-a)$',
{\it Finite Fields Appl.\/},  {\bf 19} (2013), 86--92.




\bibitem{Shp-GG} I. E. Shparlinski, `Groups generated by iterations 
of polynomials over finite fields', 
{\it Proc. Edinburgh Math. Soc.\/}, (to appear).


\bibitem{Vol1} J. F. Voloch,
`On the order of points on curves over finite fields', 
{\it Integers\/}, {\bf 7} (2007), Article~A49, 4 pp.

\bibitem{Vol2} J. F. Voloch,
`Elements of high order on finite fields from elliptic curves', 
{\it Bull. Aust. Math. Soc.\/}, {\bf 81} (2010), 425--429.

\bibitem{Wool1} T.~D.~Wooley,
`Vinogradov's mean value theorem via efficient congruencing',
{\it Ann. Math.\/}, {\bf 175} (2012), 1575--1627.


\bibitem{Wool2} T.~D.~Wooley,
`Vinogradov's mean value theorem via efficient congruencing, II',
{\it Duke Math. J.\/}, {\bf 162} (2013), 673--730.

\bibitem{Wool3} T.~D.~Wooley,
`Multigrade efficient congruencing and Vinogradov's mean value theorem',
{\it Preprint\/}, 2011 (available from {\tt http://arxiv.org/abs/1310.8447}).

\end{thebibliography}
\end{document}